\def\R{{\mathbb R}}
\numberwithin{equation}{section}
\newcounter{thm}[section]
\numberwithin{thm}{section}
\theoremstyle{plain}
\newtheorem{theorem}[thm]{Theorem}
\newtheorem{proposition}[thm]{Proposition}
\theoremstyle{definition}
\newtheorem*{cons}{Construction}
 \title{Box-Counting Dimension Using Triangles}
\author{T. Athar}
\address{Department of Mathematics, 
COMSATS Institute of Information Technology, 
Islamabad, Pakistan}
\email{tazeen@comsats.edu.pk}
\author{N. Khalid}
\address{School of Mathematics and Statistics, 
University of St. Andrews, 
Scotland, UK}
\email{nk58@st-andrews.ac.uk}
\author{S. Ul Islam}
\address{Department of Mathematics, 
COMSATS Institute of Information Technology, 
Islamabad, Pakistan}
\email{islam$\_$shams@comsats.edu.pk}
\subjclass[2000]{Primary 28A75, 28A80; Secondary 51M25}
\keywords{Fractals, Box-Counting Dimension}
\begin{document}

\maketitle 

\begin{abstract}
An alternate definition of the box-counting dimension is proposed, to provide a better approximation for fractals involving rotation such as the 'Bradley Spiral' structure. A curve fitting comparison of this definition with the box-counting dimension is also presented.
\end{abstract}

\section{Introduction and Preliminaries}

Fractals are an interesting mathematical structure and are gaining more attention day by day. A \textit{fractal} is a set that possesses a non-ending repeating pattern. A set may be formally defined as a fractal if its dimension is fractional. In addition to this, fractals hold many interesting properties. Most fractals are self-similar, and can be defined either iteratively or recursively. The fractal dimension is mostly calculated using either the Hausdorff dimension or box-counting dimension.  The former one is more stable, but the latter is easier to calculate. These and  other definitions of dimension can be seen in detail in \cite{falc}.
 
For a bounded subset $F$ of $\R^n$, let $N_\delta(F)$ denote the smallest number of boxes of side length $\delta$ covering $F$ . Then, by \cite{falc} the lower box-counting dimension of $F$ is defined as
\[ \underline{\dim}_B(F) = \varliminf_{\delta \to 0} \frac{\log N_\delta (F)}{- \log \delta}. \]
Similarly, upper box-counting dimension of $F$ is given as
\[ \overline{\dim}_B(F) = \varlimsup_{\delta \to 0} \frac{\log N_\delta (F)}{- \log \delta}. \]
The box-counting dimension of $F$ denoted as $\dim_{B}(F)$ (also known as box dimension or capacity dimension) is then defined as
\begin{equation} \label{eq:01}
\underline{\dim}_B(F)=\overline{\dim}_B(F)=\dim_{B}(F) = \lim_{\delta \to 0}  \frac{\log N_\delta (F)}{- \log \delta}. 
\end{equation}
There exists structures possessing rotataion of the portions being removed for which the existing definition of the box dimension using $\delta$-mesh cubes is hard to calculate. The aim of this paper is to deal with this problem by presenting an alternate definition of the box dimension, initially proposed by \cite{nk}. This article extends the work of \cite{nk} by providing a curve fitting comparison.


In the following section we provide an alternate definition of box dimension which will be used for a particular structure in Section 3.

\section{Box-Counting Dimension in Terms of Triangles}

Recall from \eqref{eq:01} that for a bounded subset $F$ of $\R^n$,  
 $N_{\delta}(F)$ (as mentioned in \cite{falc}) can be any of the following:
\begin{itemize}
\item[(i)] closed balls of radius $\delta$ that cover $F$, \notag 
\item[(ii)] cubes of side $\delta$ that cover $F$, \notag 
\item[(iii)] $\delta$-mesh cubes that intersect $F$, \notag 
\item[(iv)] sets of diameter at most $\delta$ that cover $F$, \notag 
\item[(v)] disjoint balls of radius $\delta$ with centres in $F$. \notag
\end{itemize}
Note that in (i)-(iv), $N_{\delta}(F)$ is the smallest number and in (v) the largest number of such sets.

For two-dimensional objects, the box dimension is usually calculated using $\delta$-mesh squares. The following theorem as mentioned in \cite{nk} proposes a new variant of the box counting dimension using $\delta$-mesh triangles. For this, let $T_\delta(F)$ denote the number of right-angled triangles of side length $\delta$ having non-empty intersection with $F$.

\begin{theorem}
Let $F$ be a bounded subset of $\R^2$. The lower and upper box-counting dimensions of $F$ in terms of $T_\delta(F)$ are given by
\[ \underline{\dim}_B(F) = \varliminf_{\delta \to 0} \frac{\log T_\delta (F)}{- \log \delta} \] and 
\[ \overline{\dim}_B(F) = \varlimsup_{\delta \to 0} \frac{\log T_\delta (F)}{- \log \delta}. \]
Moreover, if limit exists , we have the box-counting dimension as 
\begin{equation} \label{eq: 01}
 \dim _B (F) = \lim _{\delta \to 0} \frac{\log T_{\delta}(F)}{- \log \delta}.
\end{equation} 
\end{theorem}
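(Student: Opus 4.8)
The plan is to show that triangle-counting is equivalent to the standard box-counting by establishing a two-sided comparison between $T_\delta(F)$ and a known covering quantity, and then to observe that taking logarithms and the relevant limits washes out the multiplicative constants. The cleanest route is to compare $T_\delta(F)$ directly against the $\delta$-mesh square count, since the paper has already recorded (via \cite{falc}) that any of the five standard quantities in (i)--(v) yields the same box dimension. So it suffices to sandwich the triangle count between constant multiples of one of these standard counts.

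First I would fix the geometric picture: the right-angled triangles of side length $\delta$ tile the plane when two of them are glued along their common hypotenuse to form a $\delta \times \delta$ square, so the $\delta$-mesh triangulation is exactly the $\delta$-mesh square grid with each square cut by a diagonal. From this I would extract the key comparison. On one side, every $\delta$-mesh square meeting $F$ is covered by its two constituent triangles, at least one of which meets $F$; hence if $N_\delta^{\square}(F)$ denotes the mesh-square count, then $N_\delta^{\square}(F) \le T_\delta(F)$. On the other side, each triangle is contained in a single mesh square, and at most two triangles sit inside any one square, so $T_\delta(F) \le 2\,N_\delta^{\square}(F)$. This gives the two-sided estimate
\begin{equation} \label{eq:sandwich}
N_\delta^{\square}(F) \le T_\delta(F) \le 2\,N_\delta^{\square}(F).
\end{equation}

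Next I would push \eqref{eq:sandwich} through the defining limits. Taking logarithms yields
\[ \frac{\log N_\delta^{\square}(F)}{-\log\delta} \le \frac{\log T_\delta(F)}{-\log\delta} \le \frac{\log 2 + \log N_\delta^{\square}(F)}{-\log\delta}, \]
valid for all small $\delta$ where $-\log\delta > 0$. Since $\log 2$ is a fixed constant, the term $\log 2 / (-\log\delta) \to 0$ as $\delta \to 0$, so the upper and lower bounds share the same $\varliminf$ and $\varlimsup$ as $\log N_\delta^{\square}(F)/(-\log\delta)$. Applying $\varliminf$ and $\varlimsup$ to the chain therefore forces
\[ \varliminf_{\delta\to 0}\frac{\log T_\delta(F)}{-\log\delta} = \underline{\dim}_B(F), \qquad \varlimsup_{\delta\to 0}\frac{\log T_\delta(F)}{-\log\delta} = \overline{\dim}_B(F), \]
and the limit version \eqref{eq: 01} follows whenever these coincide.

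The main obstacle I anticipate is the boundary bookkeeping in the comparison \eqref{eq:sandwich}, rather than the limit passage, which is routine once the constants are in hand. One must be slightly careful that a point of $F$ lying exactly on a shared hypotenuse or on a grid line can be counted by more than one triangle; this only inflates $T_\delta(F)$ by a bounded factor and so does not affect the inequalities, but it is the step where a sloppy argument could go wrong. I would also note in passing that the specific choice of right-angled triangles (as opposed to, say, equilateral ones, which do not tile a square grid so cleanly) is what makes the direct comparison with mesh squares so transparent; for other triangle shapes one would instead compare against sets of diameter at most $c\delta$ as in (iv), at the cost of tracking an extra scale factor in the constants.
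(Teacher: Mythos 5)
Your argument is correct, and its geometric core --- each $\delta$-mesh square splits along a diagonal into two right-angled triangles, whence $N_\delta(F) \le T_\delta(F) \le 2N_\delta(F)$ --- is exactly the first displayed inequality in the paper's own proof. Where you genuinely diverge is in how that comparison is converted into a statement about dimensions. You take logarithms of the two-sided bound and observe that the additive constant $\log 2$ is annihilated by the denominator $-\log\delta$, so the $\varliminf$ and $\varlimsup$ of $\log T_\delta(F)/(-\log\delta)$ coincide with those of $\log N_\delta(F)/(-\log\delta)$; a one-step squeeze. The paper instead treats the triangles as sets of diameter $\delta\sqrt{2}$ and reruns the standard comparison between covers at scales $\delta$ and $\delta\sqrt{2}$ (the equivalence of counts (iii) and (iv) from the list in Section 2), producing the separate estimates $N_{\delta\sqrt{2}}(F)\le T_\delta(F)$ and $T_\delta(F)/2 \le 4\,N_{\delta/\sqrt{2}}(F)$ and passing each through the limit on its own. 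Both routes are valid, but yours is shorter and avoids the rescaling bookkeeping (and the attendant risk of misdirecting an inequality) precisely because the triangular mesh refines the square mesh at the \emph{same} scale $\delta$; the paper's diameter-based method is what one would fall back on for triangle shapes that do not subdivide the square grid, as you correctly note at the end. Your worry about points of $F$ lying on a shared hypotenuse is harmless: $T_\delta(F)$ counts every triangle meeting $F$, and the upper bound $2N_\delta(F)$ already allows both triangles of a square to be counted.
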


The proof is similar to the proof for $N_\delta(F)$ as for (iii) in \cite{falc}.

\begin{proof}
Consider the $\delta$-mesh squares of the form 
\[ [m \delta, (m + 1) \delta] \times [n \delta, (n + 1) \delta], \]
where $m$ and $n$ are integers. Clearly each square gives rise
 to two right angled (isoceles) triangles each of shorter side $\delta$ and longer side $\delta \sqrt{2}$. Let $N_\delta(F)$ and $T_\delta(F)$ be the number of squares and triangles, respectively, covering $F$. Then, 
 \[ N_\delta(F) \leq T_\delta (F) \leq 2 N_\delta (F). \]
Since the collection $N_\delta(F)$ gives rise to $\delta$-squares of diameter $\delta \sqrt 2$, it can be obviously seen that the collection of sets $N_{\delta \sqrt{2}}(F)$ covering $F$ gives the relation 
\[ N_{\delta \sqrt{2}}(F) \leq N_{\delta}(F) \leq T_{\delta}(F).\]
Assuming that $\delta > 0$ is sufficiently small we have $\delta \sqrt{2}<1.$ Thus,
\[ \frac{\log N_{\delta \sqrt{2}}(F)}{- \log (\delta \sqrt{2})} \leq \frac{\log T_{\delta}(F)}{- \log  \sqrt{2}- \log \delta}. \]
Taking limits as $\delta \to 0$, we obtain
\[ \underline{\dim}_B(F) \leq \varliminf_{\delta \to 0} \frac{\log T_\delta (F)}{- \log \delta} \] and 
\[ \overline{\dim}_B(F) \leq \varlimsup_{\delta \to 0} \frac{\log T_\delta (F)}{- \log \delta}. \]
Now, any set of diameter at most $\delta$ is contained in $2^2  \medspace \delta$-mesh cubes. Thus,
\[ \frac{T_\delta (F)}{2} \leq N_\delta(F) \leq 2^2  N_{\delta / \sqrt 2} (F). \]
Taking logarithms, we will get
\[ \frac{\log T_\delta(F) -\log 2}{- \log \delta - \log \sqrt 2} \leq \frac{2 \log 2 + \log N_{\delta/ \sqrt 2}(F)}{- \log \delta - \log \sqrt 2}.\]
Letting $ \delta \to 0$, we obtain the other side, i.e.
\[ \underline{\dim}_B (F) \geq \varliminf_{\delta \to 0} \frac{\log T_\delta (F)}{- \log \delta} \] and
\[ \overline{\dim}_B (F) \geq  \varlimsup_{\delta \to 0} \frac{\log T_\delta (F)}{- \log \delta}. \]
 Hence, the box-counting dimension can equivalently be defined using a collection of $\delta$-mesh triangles.
\end{proof}

\section{A Spiral As A Fractal}

In this section we consider a spiral as a fractal. The construction of the spiral is taken from \cite{brad} and is assigned the named 'Bradley Spiral'. The portions being removed from the initial set follow rotation, resulting in the spiral shape. The construction is as follows:

\begin{cons}
Consider a closed unit square $N_0$. Contruct a square $N_1$ inscribed in $N_0$ by joining the midpoints of each side of $N_0$. This produces four right-angled isoceles triangles with length of shorter side $\frac{1}{2}$. Remove the upper right triangle, say $T_1$, keeping the boundary of the longer side (i.e. the boundary intersecting with $N_1$).

Similarly to the previous step, form a square $N_2$ inscribed in $N_1$, yielding four right-angled isoceles triangles with length of shorter side $\frac{1}{2^{3/2}}$. Remove the right triangle, say $T_2$, and remove the boundary intersecting with $T_1$ (keeping the other two). In the third stage, a square $N_3$ inscribed in $N_2$ is formed by joining the midpoints of each side of $N_2$. This leaves four right-angled isoceles triangles with length of shorter side $\frac{1}{2^2}$. Remove the lower right triangle, say $T_3$, and the boundary intersecting with $T_2$. 

Continuing iteratively, at the $k$-th stage, a square $N_k$ inscribed in $N_{k-1}$ is formed yielding four right-angled isoceles triangles with length of shorter side $\frac{1}{2^{(k + 1)/2}}$. One triangle, say $T_k$, is removed along with the boundary of $T_k$ intersecting with $T_{k-1}$. 

Thus, the triangles removed form a spiral and the resulting set is named the 'Bradley Spiral', and is denoted by $S$. 
\end{cons}
Figure 1 shows first three steps and the pattern obtained at $k$-th step of the Bradley spiral. Now, we calculate the box dimension of the Bradley spiral in terms of triangles.
\begin{figure}[h]
\label{Figure-1}
\scalebox{0.35}
{\includegraphics{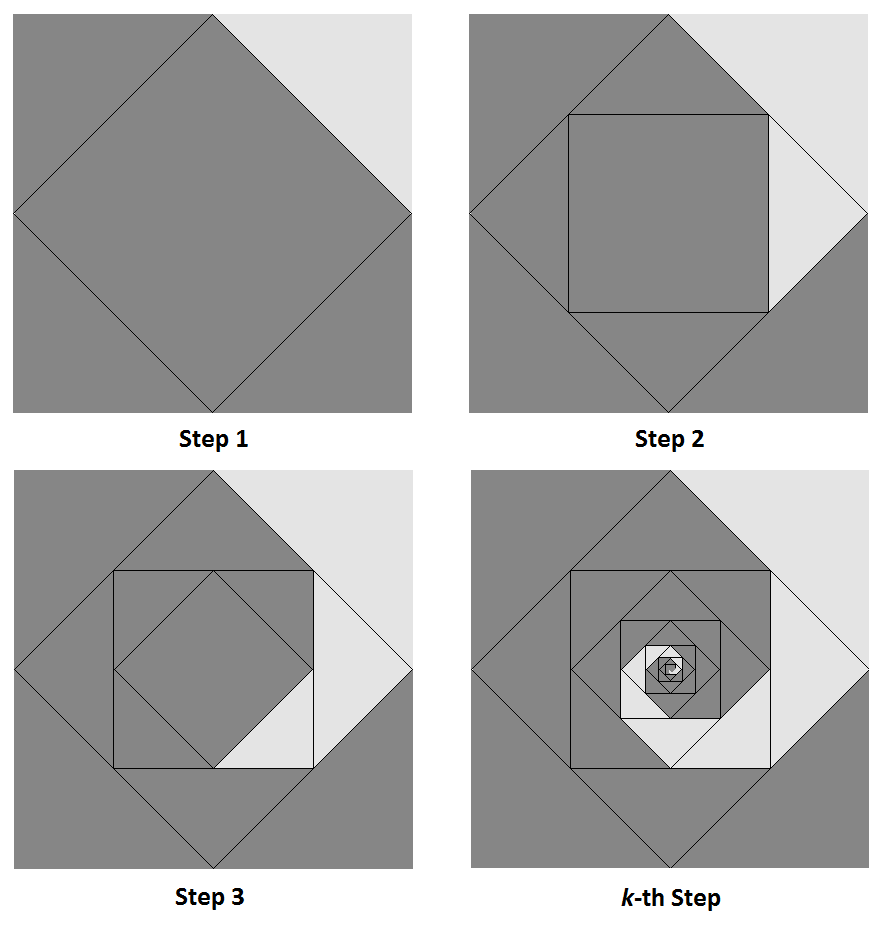}}
\caption{Bradley Spiral}
\end{figure}

\begin{proposition}
The Bradley spiral has box dimension $2$.
\end{proposition}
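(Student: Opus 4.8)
The plan is to read off the dimension from the triangle-counting characterisation in \eqref{eq: 01}, so that everything reduces to estimating $T_\delta(S)$, the number of $\delta$-triangles meeting the spiral. I would write $S=\bigcup_{k\ge 1}E_k$, where $E_k$ is the union of the boundary segments of the removed triangle $T_k$ that are retained at the $k$-th stage (the hypotenuse at $k=1$, and two sides for $k\ge 2$). Since the upper and lower box dimensions are unchanged on passing to the closure, I would work with $\overline{S}$, which adds only the common centre $c=(\tfrac12,\tfrac12)$ of the nested squares $N_k$; the goal is then to show $\log T_\delta(S)/(-\log\delta)\to 2$.

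First I would fix the natural scales $\delta_k=2^{-(k+1)/2}$, the length of the shorter side of the triangles appearing at stage $k$, and estimate $T_{\delta_k}(S)$ there; a general $\delta$ follows by monotonicity because $\delta_{k+1}/\delta_k=2^{-1/2}$ stays bounded away from $0$ and $1$. The upper bound is the routine direction: each retained piece $E_j$ with $j\le k$ has length at most $2^{-j/2}$ and hence meets $O(2^{-j/2}/\delta_k)$ triangles of side $\delta_k$, while every stage $j>k$ is contained in $N_k$, a set of diameter $O(\delta_k)$ that is covered by $O(1)$ such triangles. Summing the resulting geometric series controls $T_{\delta_k}(S)$ from above.

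The hard part will be the matching lower bound $T_{\delta_k}(S)\gtrsim \delta_k^{-2}$, and I expect this to be the main obstacle. The tool I would reach for is the approximate self-similarity of the construction: rotation by $45^\circ$ and scaling by $1/\sqrt2$ about $c$ carries $S$ onto $S\setminus E_1$ inside $N_1$, which yields a renewal relation of the form $T_\delta(S)=T_{\sqrt2\,\delta}(S)+R_\delta$, where $R_\delta$ counts the $\delta$-triangles meeting the outermost retained edges $E_1$. Everything then turns on the size of the remainder: to reach exponent $2$ one must show $R_\delta\gtrsim\delta^{-2}$, i.e.\ that at each scale the retained edges are spread across a portion of the square of positive area rather than confined to an $O(\delta)$-neighbourhood of $c$. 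This is exactly the delicate point, since a crude length count yields only $R_\delta=O(\delta^{-1})$, which would drive the exponent down to $1$; obtaining dimension $2$ therefore demands genuinely two-dimensional information about how the edges $E_k$ fan out over all four orientations as $k$ grows, and I would regard supplying that information as the crux on which the whole proposition rests.
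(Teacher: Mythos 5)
Your approach founders on a misidentification of the set $S$, and this is a genuine gap rather than a missing lemma. You take $S=\bigcup_{k\ge 1}E_k$, the union of the retained boundary segments of the removed triangles, but in the paper's construction the Bradley spiral is what \emph{remains} of the closed unit square after the triangles $T_k$ (minus certain edges) are deleted: at stage $k$ it comprises the three retained triangles from each of the stages $1,\dots,k$ together with the inner square $N_k$. This set has positive Lebesgue measure: the triangle removed at stage $k$ has area $2^{-(k+2)}$, so the total removed area is $\sum_{k\ge 1}2^{-(k+2)}=\tfrac14$ and $S$ has area $\tfrac34$. For this $S$ the lower bound you single out as the crux is immediate --- any bounded plane set of positive area satisfies $T_\delta(S)\gtrsim\delta^{-2}$, since the triangles meeting $S$ must cover it --- and no renewal relation or orientation analysis is needed. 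Indeed the paper's proof is nothing more than the exact tiling count: with $\delta_k=2^{-(k+1)/2}$, each retained stage-$j$ triangle (shorter side $\delta_j$) splits into $(\delta_j/\delta_k)^2=2^{k-j}$ triangles of side $\delta_k$ and $N_k$ splits into four, giving $3\sum_{j=1}^{k}2^{k-j}+4=3\cdot 2^k+1$, whence the limit in \eqref{eq: 01} is $2$.

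Under your reading the proposition would in fact be false, and your own closing estimate essentially proves this: each $E_j$ has length $O(2^{-j/2})$, so summing $O(2^{-j/2}/\delta)$ over the scales with $2^{-j/2}\ge\delta$ and $O(1)$ for the remaining stages (which lie in a square of diameter $O(\delta)$ about the centre) gives $T_\delta\bigl(\bigcup_k E_k\bigr)=O(\delta^{-1})$, i.e.\ box dimension $1$. No ``genuinely two-dimensional information'' about how the edges fan out can rescue $R_\delta\gtrsim\delta^{-2}$, because the edges present at scale $\delta$ have total length $O(1)$ and a rectifiable curve of bounded length meets only $O(\delta^{-1})$ mesh triangles of side $\delta$; the segments $E_k$ do rotate through all four orientations, but they simultaneously shrink geometrically toward the centre, so they never fill area. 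The moment your crude length count produced exponent $1$, that was the signal that the decomposition of $S$ --- not the lower-bound technique --- was at fault. Your upper-bound bookkeeping at the scales $\delta_k$ and the passage to the closure are sound in themselves, but they are aimed at the one-dimensional spiral of retained edges rather than at the fat complement the paper intends.
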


\begin{proof}
At the $k$-th stage, {\it S} is covered by $3 \cdot 2^k + 1$ $\delta$-mesh triangles of length $2^{- (k+1)/2}$. 
Then by \eqref{eq: 01}
\begin{align}
\dim_B(S) &= \lim_{\delta \to 0} \frac{\log T_\delta (S)}{- \log \delta} \notag \\
&= \lim_{k \to \infty} \frac{\log (3 \cdot 2^k + 1)}{- \log 2^{- (k+1)/2}} \notag
\end{align}
Observe that for large $k$, $\log (3 \cdot 2^k + 1) \approx \log (3 \cdot 2^k)$. Thus,
\begin{align}
\dim_B(S) &= \lim_{k \to \infty} \frac{\log (3 \cdot 2^k)}{\frac{(k+1)}{2} \log 2} \label{eq: 02} \\
&= \lim_{k \to \infty} \frac{2(\log 3 + k \log 2)}{(k+1) \log 2} \notag \\
&= \lim_{k \to \infty} \left[ \frac{2 k}{k + 1} + \frac{2 \log 3}{(k + 1) \log 2} \right] \notag \\
&= 2. \notag
\end{align}
\end{proof}

\section{Algorithm And Curve Fitting}

In this section, we present an algorithm for calculating the box dimension using triangle mesh of the Bradley Spiral, and also show a curve fitting comparison for the box dimension for both the triangle mesh and the square mesh.

For the algorithm, we choose equal spacing for triangle meshes $(\delta)$. The implementation is straight forward in MATLAB using the $\lim$ and $\log_{10}$ MATLAB commands. We have  also verified the results for \eqref{eq: 02} numerically and the corresponding algorithm is given below:

\begin{verbatim}
Input=k
Output=B
For i=1:k
      calculate (3.1)
    If (B >= 2) Then
        STOP
      End
  End
\end{verbatim} 

We now discuss the curve fitting for the box dimension for both the triangle mesh and the square mesh. The curve fitting here is computed from the linear least-square regression obtained for the triangle and square mesh and presented in Figure 2. We observe that the value for $\log N_\delta$ is found to increase linearly with an increase in $- \log \delta$, which is in good agreement with the linear least-square regression curve. Further, it is to be noted that, in case of the square mesh the change in $\delta$ is greater than in the triangle mesh.

\begin{figure}[h]
\label{Figure 2 (a,b)}
\scalebox{0.65}
{\includegraphics{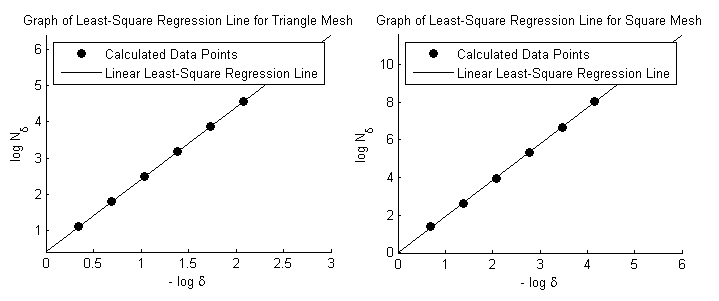}}
\caption{Curve Fitting Comparison}
\end{figure}

In conclusion, we observe that, for structures involving rotation, this alternate definition of the box dimension is easier to calculate and gives more accurate results than the box dimension using square mesh. It would be interesting to investigate the box dimension of other two dimensional fractals using this proposed definition. Further, generalization of this idea to $\R^n$ is still an open problem.

\bibliographystyle{amsplain}

\end{document}